\documentclass[11pt]{article}

\usepackage{amssymb}
\usepackage{amsmath}
\usepackage{amsthm}

\numberwithin{equation}{section}

\newtheoremstyle{slplain}
  {\topsep}
  {\topsep}
  {\slshape}
  {0pt}
  {\bfseries}
  {.}
  {0.5em}
  {}

\theoremstyle{slplain}
  \newtheorem{THM}{Theorem}[section]
  \newtheorem{LEM}[THM]{Lemma}
  \newtheorem{PROP}[THM]{Proposition}
  \newtheorem{COR}[THM]{Corollary}

\theoremstyle{definition}

\newcommand{\Fraisse}{Fra\"\i ss\'e}

\newcommand{\lexle}{\mathrel{\le_{\mathit{lex}}}}

\newcommand\nlongrightarrow{\longrightarrow\kern -1.45em/\kern 0.9em}

\renewcommand{\le}{\leqslant}

\newcommand{\0}{\varnothing}

\renewcommand{\phi}{\varphi}
\renewcommand{\epsilon}{\varepsilon}

\newcommand{\NN}{\mathbb{N}}

\newcommand{\RR}{\mathbb{R}}

\newcommand{\Boxed}[1]{\mbox{$#1$}}
\newcommand{\id}{\mathrm{id}}

\newcommand{\Set}{\mathbf{Set}}

\newcommand{\calA}{\mathcal{A}}
\newcommand{\calB}{\mathcal{B}}
\newcommand{\calC}{\mathcal{C}}

\newcommand{\calE}{\mathcal{E}}

\newcommand{\calG}{\mathcal{G}}

\newcommand{\calK}{\mathcal{K}}
\newcommand{\calL}{\mathcal{L}}

\newcommand{\calO}{\mathcal{O}}
\newcommand{\calP}{\mathcal{P}}
\newcommand{\calQ}{\mathcal{Q}}

\newcommand{\calS}{\mathcal{S}}
\newcommand{\calT}{\mathcal{T}}

\newcommand{\tp}{\mathrm{tp}}

\newcommand{\Emb}{\mathrm{Emb}}

\newcommand{\im}{\mathrm{im}}

\newcommand{\RSurj}{\mathrm{RSurj}}
\newcommand{\Map}{\mathrm{Map}}

\title{Big Ramsey combinatorics of the Cantor set\\
        and a simple proof of Blass' perfect set theorem}
\author{%
  Dragan Ma\v sulovi\'c\\
  University of Novi Sad, Faculty of Sciences\\
  Department of Mathematics and Informatics\\
  Trg Dositeja Obradovi\'ca 3, 21000 Novi Sad, Serbia\\
  e-mail: dragan.masulovic@dmi.uns.ac.rs
}

\begin{document}
\maketitle

\begin{abstract}
In this paper we present a simple approach to big Ramsey combinatorics of the Cantor set $2^\omega$.
Using Infinite Dual Ramsey Theorem of Carlson and Simpson, we show that $2^\omega$,
viewed as a topological space, has finite big Ramsey degrees. We then examine several natural topological
first-order structures arising from the Cantor set and prove that each of them inherits finite big Ramsey degrees.
As a consequence, we obtain a simple proof of Blass' perfect set theorem, although our method does not recover the sharp bound $(n-1)!$
for the number of colors. We also show that the complete Boolean algebra on countably many atoms has finite big Ramsey degrees,
in contrast with the recent result showing that the countable atomless Boolean algebra does not have big Ramsey degrees.

  \bigskip

  \noindent
  \textbf{Key Words and Phrases:} Cantor set, big Ramsey degrees, Dual Ramsey Theorem, perfect sets, Boolean algebras.

  \bigskip

  \noindent
  \textbf{AMS Subj. Classification (2020):} 05C55, 03C68, 54F99.
\end{abstract}

\section{Introduction}

Structural Ramsey theory underwent a paradigm shift at the turn of the century with
the publication of the seminal work by Kechris, Pestov, and Todor\v cevi\'c \cite{KPT}.
This paper established what is now known as the \emph{KPT correspondence},
a profound link between the Ramsey property of a class of finite structures and the
topological dynamics of the automorphism group of its \Fraisse\ limit.
Notably, the KPT correspondence initiated several new research directions,
including a renewed and intensive interest in the study of big Ramsey degrees.
While the study of big Ramsey degrees is firmly rooted in the context of \Fraisse\ limits,
results have also been established for other kinds of countable structures
\cite{masul-countable-chains-2023,barbosa-masul-nenadov-2024,masul-toljic-0}.

A structure $\calS$ is said to have \emph{finite big Ramsey degrees} if
for every finite substructure $\calA$ of $\calS$ there exists a positive integer
$t = t(\calA) \in \NN$ such that the following holds: for every finite Borel coloring
$\gamma : \Emb(\calA, \calS) \to k$ there is an embedding $w : \calS \hookrightarrow \calS$
for which $|\gamma(g \circ \Emb(\calA, \calS))| \le t$.
If no such bound exists for at least one finite substructure of $|calS$,
we say that $\calS$ does not have big Ramsey degrees.

When $\calS$ is a countable relational structure, the Borel $\sigma$-algebra on
$\Emb(\calA, \calS)$ is trivial for every finite substructure $\calA$ of $\calS$,
and consequently every finite coloring $\Emb(\calA, \calS) \to k$ is Borel.
In contrast, when $\calS$ is uncountable, as is the case throughout this paper,
the assumption that the coloring $\gamma$ be Borel becomes essential.

In comparison to the vast body of results concerning big Ramsey degrees of countable relational structures,
our knowledge of the uncountable case remains centered on linear orders and tree-like structures,
largely because the transition to uncountable demands a shift from purely combinatorial techniques
to the more complex set-theoretic toolbox. Ramsey theorems for perfect sets began this line of research,
largely drawing upon Milliken's and Halpern-L\"auchli's theorem.
Blass' proof of Galvin's conjecture on a partition result for perfect subsets of $\RR$ is a
landmark example. The original proof, which was based on the Halpern-L\"auchli theorem, was later simplified
by Todor\v cevi\'c using Milliken spaces (see \cite{todorcevic-RamseySpaces-2010}).

\begin{THM}[Blass \cite{blass-1981}]
  For every perfect subset $P$ of $\RR$ and every finite continuous
  coloring of the set  $[P]^n$ of all $n$-element subsets of $P$,
  there is a perfect set $Q \subseteq P$ such that $[Q]^n$ has at most $(n - 1)!$ colors.
\end{THM}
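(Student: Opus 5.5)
We plan to reduce the statement to the Cantor set and then to a partition theorem for finite sets of branches in the complete binary tree $2^{<\omega}$. Since every perfect subset $P$ of $\RR$ contains a homeomorphic copy of $2^\omega$, order-embedded with respect to the lexicographic order $\lexle$, it suffices to treat $P = 2^\omega$ with a continuous coloring $\gamma : [2^\omega]^n \to k$. Perfect subsets of $2^\omega$ arise as sets of branches $[T]$ of perfect subtrees $T \subseteq 2^{<\omega}$. Moreover, any strongly embedded copy of $2^{<\omega}$ inside $2^{<\omega}$ (a map preserving meets, the lexicographic order and levels in the relevant sense) induces a homeomorphism of $2^\omega$ onto a perfect subset. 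So it will be enough to find such a copy on which $\gamma$ takes at most $(n-1)!$ values.

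\textbf{Step 1: the type of an $n$-set.} To each $n$-element set $X = \{x_1 \lexle \dots \lexle x_n\} \subseteq 2^\omega$ we attach its meet-tree type. For $i < n$ let $\Delta_i$ be the length of the longest common initial segment of $x_i$ and $x_{i+1}$. The meet of $x_i$ and $x_j$ (with $i<j$) has length $\min_{i \le l < j} \Delta_l$, so the whole configuration is determined up to isomorphism by the relative order of $\Delta_1, \dots, \Delta_{n-1}$. After passing to a subtree in which distinct splitting nodes lie on distinct levels, these $n-1$ numbers are pairwise distinct. The number of possible orderings, and hence of types, is therefore exactly $(n-1)!$. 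For instance, if $n=2$ there is one type, and if $n=3$ there are two, according to whether $x_1,x_2$ split before or after $x_2,x_3$.

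\textbf{Step 2: homogeneity within a type.} We fix one type $\tau$ and show that there is a perfect subtree on which $\gamma$ is constant on all $n$-sets of type $\tau$. Since there are finitely many types, iterating this over the types, each time passing to a further perfect subtree, yields the bound $(n-1)!$. The key observation is that continuity of $\gamma$ lets us color finite objects. Given a set $X$ of type $\tau$, by continuity $\gamma(X)$ is determined by the restrictions $x_i \upharpoonright m$ for $m$ sufficiently large. We would thin the tree with a fusion argument so that at each stage the color is already decided by the finite skew subtree generated by the meets together with one further level. This yields a coloring of strong subtrees (or finite antichains with a prescribed meet structure) of $2^{<\omega}$.

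We then apply Milliken's tree theorem, or equivalently the Halpern--L\"auchli theorem in its strong-subtree form, to make this finite coloring monochromatic on a strong subtree isomorphic to $2^{<\omega}$. Finally we take branches, so that every $n$-set of type $\tau$ of the resulting perfect set receives that single color.

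\textbf{Main obstacle.} The hard step is the passage from the continuous coloring of $n$-sets of branches to a coloring of finite subtrees. Continuity only gives, for each $X$, some level past which the color is stable, and this level is not uniform. The plan is a fusion construction. At stage $m$ we have finitely many nodes on the current frontier and finitely many partial configurations, and we extend each frontier node to a perfect cone so that all relevant $n$-sets have their colors decided by bounded initial segments. Compactness of $2^\omega$ makes each such step possible, since a continuous function on a compact set is uniformly continuous. We must check that the fusion limit is still a strong subtree, so that Milliken's theorem applies, and that all the types counted in Step 1 are realized by the final embedding.
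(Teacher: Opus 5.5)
The paper does not prove this sharp statement. It only cites Blass, and its own Carlson--Simpson argument (transposing $\Map(n,2^\omega)$ to $\Map(\omega,2^n)$ and homogenizing each mapping type) gives some unspecified $t(n)$, explicitly not $(n-1)!$. So your Milliken-based route is necessarily different from the paper, and it is the right toolkit for the sharp bound. As written, however, it has a genuine gap at the point where Milliken's homogeneity is transferred to your $(n-1)!$ types.

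\textbf{The gap.} Milliken's theorem (or Halpern--L\"auchli) homogenizes colorings of finite strong subtrees, that is, of strong similarity types. The strong similarity type of an $n$-set records strictly more than the order of $\Delta_1,\dots,\Delta_{n-1}$. It also records ties between meet levels, and the passing numbers $x_r(\Delta_j)$ of each branch at the splitting levels of the other branches. Hence ``take branches of the homogeneous strong subtree'' does not give one color per type.

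\emph{Counterexample.} Take $n=3$ and the type $\Delta_1<\Delta_2$. Let $\gamma(\{x_1,x_2,x_3\})=x_1(\Delta_2)$, and let $\gamma$ be $0$ on the other type. This $\gamma$ is locally constant, hence continuous. Yet on the branches of every strong subtree $S$ both values occur, because the node $x_1\upharpoonright\Delta_2$ is itself a splitting node of $S$.

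\emph{Internal inconsistency.} Your Step 1 (splitting nodes on distinct levels) is incompatible with your requirement that the fusion limit be a strong subtree, since a strong subtree has $2^i$ splitting nodes on its $i$-th level. The plan never says how an $n$-set of type $\tau$ in the final perfect set is matched with a single colored finite object.

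\textbf{The missing idea} is a final thinning that freezes the passing numbers.
\begin{enumerate}
\item Work inside a level-structured copy $S$ of $2^{<\omega}$.
\item Color each height-$n$ strong subtree $U$ of $S$ by the color of its \emph{canonical} type-$\tau$ configuration. Here the branches split at the prescribed meets and go through the left successor at every other node of $U$ they pass. This is well defined once colors are decided one level above the last meet.
\item Apply Milliken once for each type, each time inside the previous strong subtree. A strong subtree of a strong subtree is strong, so the iteration is harmless.
\item Only at the very end, pass to a perfect $Q$ with $[Q]\subseteq[S]$ such that each level of $S$ carries at most one splitting node of $Q$, and every non-splitting node of $Q$ on a level of $S$ continues through its left successor.
\end{enumerate}
Then each $X\subseteq[Q]$ of type $\tau$ is exactly the canonical configuration of a height-$n$ strong subtree of $S$, namely one whose levels are the meet levels of $X$ and the next level of $S$. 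So $X$ gets the single color attached to $\tau$, and the bound $(n-1)!$ follows.

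\textbf{The step you flag as the main obstacle is routine.} The space $[2^\omega]^n$ is not compact, but $\gamma$ is uniformly continuous on each product $[t_1]\times\dots\times[t_n]$ of pairwise incomparable cones. So in the fusion you simply stretch every frontier node, without branching, to a common level beyond which the colors of all finitely many current configurations are decided. This also keeps the fusion limit a strong subtree.
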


Other examples of uncountable structures with finite big Ramsey degrees include
the $\kappa$-rationals \cite{DzLM-DLO-2009} and the $\kappa$-Rado graph \cite{DzLM-RADO-2009} for a suitable cardinal $\kappa$,
as well as a large family of universal inverse limit structures \cite{dobrinen-wang-2023}.

In this paper we provide a simple, combinatorial proof of Blass' perfect set theorem,
and derive several consequences regarding a some first-order structures that can
naturally be associated with the Cantor set. While our approach is more accessible, it yields a
weaker result: we establish the existence of big Ramsey degrees for the Cantor set, but do not recover the exact
value of $(n-1)!$ for the degree of an $n$-element subset.

The paper is organized as follows. In Section~\ref{brccs.sec.prelim} we fix the notation and recall
the Dual Ramsey Theorem of Carlson and Simpson, which is out main tool for handling big Ramsey degrees of
the Cantor set and associated structures. In Section~\ref{brccs.sec.CS} we prove a useful corollary of the
Dual Ramsey Theorem and show that the Cantor set with no additional structure has finite big Ramsey degrees.
Finally, in Section~\ref{brccs.sec.CS-FOS} we recall that the Cantor can be endowed with
additional first-order structure in various natural ways, and show that each first-order structure obtained this
way has finite big Ramsey degrees. In particular, we provide a simple proof of Blass' perfect set theorem.
Moreover, we show that the complete Boolean algebra on countably many atoms has finite big Ramsey degrees,
which is in sharp contrast with the recent result \cite{no-fbrd-ctble-atomless-ba} that the countable atomless
Boolean algebra does not have big Ramsey degrees.

\section{Preliminaries}
\label{brccs.sec.prelim}

We adopt the usual set-theoretic notation
$\omega = \{0, 1, 2, 3, \ldots\}$ and $n = \{0, 1, \ldots, n-1\}$, and we let
$\NN = \{1, 2, 3, \ldots \}$. Sometimes it will be more convenient to 
denote the set $B^A$ of all mappings $A \to B$ by $\Map(A, B)$.

Let $(X, \Boxed<)$ and $(Y, \Boxed<)$ be well-ordered sets. A function $f : X \to Y$ is a \emph{rigid surjection}
if $f$ is surjective and $\min f^{-1}(y_1) < \min f^{-1}(y_2)$ for $y_1, y_2 \in Y$
such that $y_1 < y_2$. Let $\RSurj(X, Y)$ denote the set of all rigid surjections $X \to Y$.

Every rigid surjection $f : (X, \Boxed<) \to (Y, \Boxed<)$ uniquely determines an embedding
$f^\partial : (Y, \Boxed<) \hookrightarrow (X, \Boxed<)$ defined by
$f^\partial(y) = \min f^{-1}(y)$, $y \in Y$.
This construction is functorial in the sense that:
\begin{equation}\label{brccs.eq.partial}
  (\id_X)^\partial = \id_X \text{ and } (g \circ f)^\partial = f^\partial \circ g^\partial.
\end{equation}

A \emph{finite coloring} of a set $S$ is a mapping $\gamma : S \to k$ where $k \in \NN$.
A finite coloring $\gamma : S \to k$ of a topological space $S$ is a \emph{Borel coloring}
if $\gamma^{-1}(i)$ is a Borel set, for every $i \in k$.

In the language of rigid surjections, the Infinite Dual Ramsey Theorem of Carlson and Simpson
takes the following form.

\begin{THM}[Dual Ramsey Theorem {\cite[Theorem 1.2]{carlson-simpson-1984}}]
  For every finite linear order $L$ and every finite Borel coloring $\gamma : \RSurj(\omega, L) \to k$
  there is a $w \in \RSurj(\omega, \omega)$ such that $|\gamma(\RSurj(\omega, L) \circ w)| = 1$.
\end{THM}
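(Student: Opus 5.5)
This is the Carlson--Simpson theorem, cited from \cite{carlson-simpson-1984}, so the plan is to translate their formulation into the rigid-surjection language rather than prove it from scratch. Carlson and Simpson work with partitions of $\omega$: $(\omega)^L$ is the space of partitions of $\omega$ into $|L|$ pieces, and an infinite partition $X$ of $\omega$ with blocks ordered by their minima gives $(X)^L$, the coarsenings of $X$ into $|L|$ pieces. Their theorem states that for every finite Borel partition of $(\omega)^L$ (in the topology inherited from $2^{\omega\times\omega}$, equivalently from $\Map(\omega,\omega)$) there is an infinite partition $X$ with $(X)^L$ monochromatic.

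The translation proceeds in three steps. First, I will show that $\RSurj(\omega, L)$ corresponds bijectively to the partitions of $\omega$ into $|L|$ blocks. Given a rigid surjection $f$, its fibres form such a partition. Conversely, from a partition, list the blocks in increasing order of minima and send the $i$-th block to the $i$-th element of $L$; this is the unique rigid surjection with those fibres. The correspondence is a homeomorphism when $\RSurj(\omega,L)\subseteq L^\omega$ carries the product topology, since both sides are determined by finite initial segments. Consequently Borel colorings correspond to Borel colorings.

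Second, I will show that $\RSurj(\omega,\omega)$ corresponds in the same way to infinite partitions $X$ of $\omega$, with blocks again enumerated by minima. For $f\in\RSurj(\omega,L)$, the composite $f\circ w$ is surjective, and it is rigid because $\min (f\circ w)^{-1}(\ell)=\min w^{-1}(\min f^{-1}(\ell))$ and $w^\partial$ is increasing. The fibres of $f\circ w$ are unions of blocks of $w$, so $f\circ w$ is a coarsening of the partition given by $w$. Conversely, every coarsening of $w$ into $|L|$ blocks arises as $f\circ w$ for a unique rigid $f$: set $f(n)$ to be the label of the block containing $w$'s $n$-th block, and rigidity of $f$ follows from monotonicity of $w^\partial$. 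Hence $\RSurj(\omega,L)\circ w$ is exactly $(X)^L$.

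Third, pulling the coloring back along the correspondence and applying Carlson--Simpson yields the required $w$. The only step needing care is the topological one, namely confirming that the Borel structure used by Carlson--Simpson agrees with ours. I expect this to be routine, since both structures are generated by cylinder sets determined by finite restrictions.
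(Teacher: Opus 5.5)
Your proposal is correct and matches the paper, which gives no proof of its own: it cites Carlson--Simpson and asserts that their theorem, restated for rigid surjections, takes this form. You spell out the routine dictionary the paper leaves implicit: fibres of $f\in\RSurj(\omega,L)$ correspond to $|L|$-block partitions ordered by minima, $\RSurj(\omega,L)\circ w$ corresponds to the coarsenings $(X)^{|L|}$ of the partition determined by $w$, and the correspondence is a homeomorphism, so Borel colorings transfer (with $L$ tacitly nonempty, so that $(X)^{|L|}\neq\0$ yields exactly one color).
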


Let $L$ be a first-order language consisting of functional, relational and constant symbols.
An injective map $\phi : X \to Y$ between first-order structures
$(X, L^X)$ and $(Y, L^Y)$ is a \emph{(first-order) embedding}
if $\phi$ is an isomorphism of $X$ onto the substructure of $Y$ whose underlying set is $\im(\phi) = \{ \phi(x) : x \in X \} \subseteq Y$.
(Note that this implicitly requires that $\im(\phi)$ be closed for fundamental operations $f^Y$ of $Y$ for each functional
symbol $f \in L$, if those exist.)
An injective map $\phi : X \to Y$ between topological spaces $(X, \tau)$ and $(Y, \sigma)$ is a \emph{topological embedding}
if $\phi$ is a homeomorphism of $X$ onto the subspace of $Y$ whose underlying set is $\im(\phi) \subseteq Y$.

A \emph{topological first-order structure} is a topological space $(X, \tau)$ carrying the first order structure
$(X, L^X)$ where the fundamental operations $f^X$ are continuous for each functional symbol $f \in L$,
and relations $R^X$ are closed for each relational symbol $R \in L$.
An injective function $\phi : X \to Y$ between two topological first-order structures is a
\emph{topological first-order embedding} if $\phi$ is both a topological and a first-order embedding of~$X$ into~$Y$.
If $\phi : X \to Y$ is a topological first-order embedding then $\im(\phi) \subseteq Y$ is a
\emph{topological copy of $X$ in $Y$}.

\section{The Cantor set has finite big Ramsey degrees}
\label{brccs.sec.CS}

Using a straightforward corollary of the Dual Ramsey Theorem, in this section we prove that the
Cantor set as a topological space has finite big Ramsey degrees. Our proof is simpler than the proofs of Blass and Todor\v cevi\'c,
but yields a weaker result: we establish the existence of big Ramsey degrees, but do not recover the exact
value of $(n-1)!$ for the degree of an $n$-element subset.

\begin{PROP}\label{brccs.prop.CS}
  For every nonempty finite set $A$ there is a $t = t(|A|) \in \NN$ such that for every finite Borel coloring $\gamma : \Map(\omega, A) \to k$
  there is an $h \in \RSurj(\omega, \omega)$ satisfying $|\gamma(\Map(\omega, A) \circ h)| \le t$.
\end{PROP}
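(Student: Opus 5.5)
Every map $f \in \Map(\omega, A)$ factors canonically through a rigid surjection, and I will partition $\Map(\omega, A)$ by the resulting finite "type", reducing each piece to the Dual Ramsey Theorem. Fix a linear order on $A$. Given $f : \omega \to A$, let $B = \im(f) \subseteq A$ and order $B$ by first occurrence: $b <_f b'$ iff $\min f^{-1}(b) < \min f^{-1}(b')$. Let $m = |B|$ and let $e_f : m \to B$ list $B$ in this order, an injection $m \to A$. Then $f = e_f \circ r_f$, where $r_f = e_f^{-1} \circ f \in \RSurj(\omega, m)$, and the pair $(e_f, r_f)$ is uniquely determined by $f$. The set $E$ of injections $e : m \to A$ with $1 \le m \le |A|$ is finite, with
\[
|E| = \sum_{m=1}^{|A|} \frac{|A|!}{(|A|-m)!},
\]
a number depending only on $|A|$. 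I will take $t(|A|) = |E|$, and the goal is to make each type monochromatic.

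For each $e \in E$ with domain $m$, define $\gamma_e : \RSurj(\omega, m) \to k$ by $\gamma_e(r) = \gamma(e \circ r)$. This is Borel because $r \mapsto e \circ r$ is continuous from $\RSurj(\omega, m) \subseteq m^\omega$ into $A^\omega$ (coordinatewise). The key compatibility fact is that precomposition with a rigid surjection preserves types. If $h \in \RSurj(\omega, \omega)$ and $r \in \RSurj(\omega, m)$, then $r \circ h \in \RSurj(\omega, m)$ since rigid surjections compose, which is the content behind \eqref{brccs.eq.partial}. Hence $(e \circ r) \circ h = e \circ (r \circ h)$ has the same type $e$. Consequently $\Map(\omega, A) \circ h$ is the union over $e \in E$ of $e \circ \RSurj(\omega, m) \circ h$.

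I will then iterate the Dual Ramsey Theorem over the finitely many $e \in E$, enumerated $e_1, \dots, e_N$. Set $h_0 = \id_\omega$. Given $h_{i-1}$, apply the theorem to the Borel coloring $r \mapsto \gamma_{e_i}(r \circ h_{i-1})$ on $\RSurj(\omega, m_i)$. This coloring is Borel because precomposition by a fixed $h_{i-1}$ is continuous: each output coordinate depends on one input coordinate. The theorem yields $w_i$ such that $\gamma_{e_i}$ is constant on $\RSurj(\omega, m_i) \circ w_i \circ h_{i-1}$. Put $h_i = w_i \circ h_{i-1}$. Since $\RSurj(\omega, m) \circ w \subseteq \RSurj(\omega, m)$, later stages preserve the monochromaticity achieved earlier. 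With $h = h_N$, each type contributes one color, so $|\gamma(\Map(\omega, A) \circ h)| \le N = t(|A|)$.

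The main obstacle I expect is bookkeeping rather than anything deep. Two points need care: the Borel measurability of the transferred colorings, which requires the continuity observations above, and the ordering of the composition. The new $w_i$ must act on the right of the earlier stages' maps as seen from the colorings, i.e.\ $h_i = w_i \circ h_{i-1}$, so that the composite works as written. Constant maps (the case $m=1$) are already handled uniformly, since $\RSurj(\omega,1)$ is a single point.
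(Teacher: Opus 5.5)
Your proof is correct and follows the paper's argument: the same partition of $\Map(\omega, A)$ into $\sum_{m=1}^{|A|} |A|!/(|A|-m)!$ mapping types (your injections $e : m \to A$ are the paper's non-repeating words $\tau$), the same iterated application of the Dual Ramsey Theorem with $h = w_N \circ \cdots \circ w_1$, and the same use of $\RSurj(\omega,m)\circ w \subseteq \RSurj(\omega,m)$ to keep earlier blocks monochromatic. The only difference is cosmetic: you normalize each block to $\RSurj(\omega, m)$ via $e$, and you give the continuity argument for Borel measurability of the transferred colorings explicitly, where the paper leaves it implicit.
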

\begin{proof}
  Let $\calT$ denote the set of all nonempty finite words over the alphabet $A$ with non-repeating letters. Put
  $$
    t = |\calT| = \sum_{i=1}^{n} \frac{n!}{(n-i)!}
  $$
  where $n = |A|$. We say that a $\tau \in \calT$ is a \emph{mapping type} of
  $f \in \Map(\omega, A)$, and write $\tp(f) = \tau$, if $f : \omega \to \tau$ is a rigid surjection,
  where we now take $\tau = a_{i_1} a_{i_2} \ldots a_{i_\ell}$ to be a finite
  linear order $a_{i_1} \prec a_{i_2} \prec \ldots \prec a_{i_\ell}$.
  Note that
  $$
    \RSurj(\omega, \tau) = \{f \in \Map(\omega, A) : \tp(f) = \tau\},
  $$
  so $\{\RSurj(\omega, \tau) : \tau \in \calT\}$ is a partition of $\Map(\omega, A)$ into $t$ many blocks.
  It is easy to see that this is a Borel partition of $\Map(\omega, A)$.

  Take any finite Borel coloring $\gamma : \Map(\omega, A) \to k$. We shall now inductively construct a sequence of
  rigid surjections $g_0, g_1, \ldots, g_t \in \RSurj(\omega, \omega)$ and a sequence of
  Borel colorings $\gamma_1$, \ldots, $\gamma_t$ as follows.
  
  To start the induction let $g_0$ be the identity mapping $\omega \to \omega$.
  For the inductive step assume that $g_0$, \ldots, $g_{j-1}$ and $\gamma_1$, \ldots, $\gamma_{j-1}$ have been constructed.
  Define  $\gamma_j : \RSurj(\omega, \tau_j) \to k$ by
  $$
    \gamma_j(f) = \gamma(f \circ g_{j-1} \circ \ldots \circ g_1 \circ g_0).
  $$
  By the Infinite Dual Ramsey Theorem there is a $g_j \in \RSurj(\omega, \omega)$ such that
  $|\gamma_j(\RSurj(\omega, \tau_j) \circ g_j)| = 1$. Therefore,
  $$
    |\gamma(\RSurj(\omega, \tau_j) \circ g_j \circ g_{j-1} \circ \ldots \circ g_1)| = 1.
  $$
  Finally, let $h = g_{t} \circ g_{t-1} \circ \ldots \circ g_1$. Then, having in mind that
  $$
    \RSurj(\omega, L) \circ g \subseteq \RSurj(\omega, L)
  $$
  for every finite linear order $L$ and every $g \in \RSurj(\omega, \omega)$:
  \begin{align*}
    \textstyle|\gamma(\Map(\omega, A) \circ h)| 
      &\textstyle = \sum_{j=1}^t |\gamma(\RSurj(\omega, \tau_j) \circ h)|\\
      &\textstyle = \sum_{j=1}^t |\gamma(\RSurj(\omega, \tau_j) \circ g_t \circ \ldots \circ g_{j+1} \circ g_{j} \circ \ldots \circ g_1)|\\
      &\textstyle \le \sum_{j=1}^t |\gamma(\RSurj(\omega, \tau_j) \circ g_{j} \cdot \ldots \cdot g_1)| = t.
  \end{align*}
  This concludes the proof.
\end{proof}

Let $\calP(A) = 2^A$ denote the \emph{contravariant powerset functor}.
Given a map $f : A \to B$, $\calP(f) : 2^B \to 2^A$ takes $x \in 2^B$
to $x \circ f \in 2^A$. Intuitively, if we take $\calP(A)$ to be the set of all subsets of $A$,
then $\calP(f)$ takes $X \subseteq B$ to $f^{-1}(X) \subseteq A$.

For arbitrary nonempty sets $A$ and $B$, we endow the set $B^A$ with the pointwise-convergence topology,
so that $2^\omega$ becomes the Cantor space.
It is easy to see that $\calP(f) : 2^B \to 2^A$ is continuous for every $\Set$-mapping $f : A \to B$,
and if $f$ is surjective, $\calP(f)$ is a topological embedding.

Since $2^A$ carries the pointwise convergence topology, we endow $(2^A)^B$ with the Tychonoff product topology.
Under this convention, $(2^A)^B$ and $2^{A \times B}$ are homeomorphic. Moreover, the \emph{transpose}:
$$
  \Phi_{A, B} : (2^A)^B \to (2^B)^A \text{ given by } \Phi_{A, B}(f)(a)(b) = f(b)(a)
$$
is a homeomorphism. Note that $\Phi_{A, B}^{-1} = \Phi_{B, A}$.

\begin{LEM}\label{brccs.lem.Phi-0}
  Let $\Phi_{A, B} : (2^A)^B \to (2^B)^A$ be the transpose defined above.
  Take any map $h : A \to A$. Then $\Phi_{A, B}(\calP(h) \circ f) = \Phi_{A, B}(f) \circ h$ for every $f \in (2^A)^B$.
\end{LEM}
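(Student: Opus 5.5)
The identity is a direct pointwise computation: both sides are elements of $(2^B)^A$, i.e. maps $A \to 2^B$ whose values are maps $B \to 2$. So it suffices to check that they agree after evaluating at an arbitrary $a \in A$ and then at an arbitrary $b \in B$. No earlier result is needed beyond the definitions of $\calP$ and $\Phi_{A,B}$.

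First I fix the types. For $f \in (2^A)^B$ we have $f : B \to 2^A$, and $\calP(h) : 2^A \to 2^A$ is given by $x \mapsto x \circ h$. Hence $\calP(h) \circ f : B \to 2^A$ is again an element of $(2^A)^B$, so the left-hand side is defined. On the right, $\Phi_{A,B}(f) : A \to 2^B$ and $h : A \to A$, so $\Phi_{A,B}(f) \circ h : A \to 2^B$, which lies in $(2^B)^A$ as required.

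Next I compute both sides at $a \in A$, $b \in B$. For the left-hand side, the definition of the transpose gives
$$
\Phi_{A,B}(\calP(h)\circ f)(a)(b) = (\calP(h)\circ f)(b)(a) = \calP(h)(f(b))(a) = (f(b)\circ h)(a) = f(b)(h(a)).
$$
For the right-hand side,
$$
(\Phi_{A,B}(f)\circ h)(a)(b) = \Phi_{A,B}(f)(h(a))(b) = f(b)(h(a)).
$$
The two values coincide. Since $a$ and $b$ were arbitrary, the two maps are equal by extensionality.

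There is no real obstacle here. The only care needed is bookkeeping the order of the arguments in the transpose, together with the contravariance of $\calP$, which is what converts post-composition by $\calP(h)$ into pre-composition by $h$.
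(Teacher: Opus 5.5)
Your proof is correct and follows the paper's argument essentially verbatim: the paper proves the lemma by the same pointwise chain of equalities at arbitrary $a \in A$, $b \in B$, ending in $f(b)(h(a))$ on both sides. Your extra type-checking of both sides is a harmless addition that the paper leaves implicit.
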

\begin{proof}
  Take any $a \in A$ and $b \in B$. Then:
  \begin{multline*}
    \Phi_{A, B}(\calP(h) \circ f)(a)(b) = (\calP(h) \circ f)(b)(a) =\\
	= \calP(h)(f(b))(a) = (f(b) \circ h)(a) = f(b)(h(a)) =\\
	= \Phi_{A, B}(f)(h(a))(b) = (\Phi_{A, B}(f) \circ h)(a)(b).
  \end{multline*}
  This completes the proof of the lemma.
\end{proof}

\begin{THM}\label{brccs.thm.Cantor-0}
  Let $2^\omega$ be the Cantor set. For every nonempty finite set $A$ there is a $t \in \NN$ such that for every finite Borel
  coloring $\gamma : \Map(A, 2^\omega) \to k$ there is an $h \in \RSurj(\omega, \omega)$ such that
  $|\gamma(\calP(h) \circ \Map(A, 2^\omega))| \le t$.
  In particular, there is a Cantor set $\calK \subseteq 2^\omega$ such that $|\gamma(\Map(A, \calK))| \le t$.
\end{THM}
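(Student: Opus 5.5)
The plan is to transfer the problem to Proposition~\ref{brccs.prop.CS} by means of the transpose $\Phi = \Phi_{\omega, A} : (2^\omega)^A \to (2^A)^\omega$, which is a homeomorphism. Write $\Map(A, 2^\omega) = (2^\omega)^A$ and note that $(2^A)^\omega = \Map(\omega, 2^A)$, where $2^A$ is a nonempty finite set with $2^{|A|}$ elements. Given a finite Borel coloring $\gamma : \Map(A, 2^\omega) \to k$, I would define $\gamma' = \gamma \circ \Phi^{-1} = \gamma \circ \Phi_{A,\omega} : \Map(\omega, 2^A) \to k$. This map is Borel because $\Phi^{-1}$ is continuous. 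Apply Proposition~\ref{brccs.prop.CS} to the finite set $2^A$. This gives $t = t(2^{|A|})$ and an $h \in \RSurj(\omega, \omega)$ with $|\gamma'(\Map(\omega, 2^A) \circ h)| \le t$.

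Next, I use Lemma~\ref{brccs.lem.Phi-0} with the roles $A \mapsto \omega$, $B \mapsto A$ and the map $h : \omega \to \omega$. It gives $\Phi(\calP(h) \circ f) = \Phi(f) \circ h$ for every $f \in \Map(A, 2^\omega)$. Hence
$$
  \gamma(\calP(h) \circ f) = \gamma'(\Phi(\calP(h)\circ f)) = \gamma'(\Phi(f) \circ h),
$$
and $\Phi(f)$ ranges over all of $\Map(\omega, 2^A)$ because $\Phi$ is a bijection. Therefore $\gamma(\calP(h) \circ \Map(A, 2^\omega)) = \gamma'(\Map(\omega, 2^A) \circ h)$, which has at most $t$ elements. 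This proves the first claim.

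For the ``in particular'' part, put $\calK = \calP(h)(2^\omega) = \{x \circ h : x \in 2^\omega\}$. Since $h$ is surjective, $\calP(h)$ is a topological embedding of $2^\omega$ into itself, as noted above. So $\calK$ is a homeomorphic copy of the Cantor set, and it is compact and hence closed. Every $g \in \Map(A, \calK)$ factors as $g = \calP(h) \circ f$ with $f = \calP(h)^{-1} \circ g \in \Map(A, 2^\omega)$, using injectivity of $\calP(h)$. Thus $\Map(A, \calK) = \calP(h) \circ \Map(A, 2^\omega)$, and the bound carries over.

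The only step I expect to need care is the bookkeeping with the transpose. One has to check that the indices in Lemma~\ref{brccs.lem.Phi-0} match, so that precomposition by $h$ on the $\omega$-coordinate of $(2^A)^\omega$ corresponds exactly to postcomposition by $\calP(h)$ on $(2^\omega)^A$. One also has to verify Borelness of $\gamma'$. Both are routine given that $\Phi$ is a homeomorphism. Note that $t$ depends only on $|A|$, namely $t = \sum_{i=1}^{m} m!/(m-i)!$ with $m = 2^{|A|}$.
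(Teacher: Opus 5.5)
Your proposal is correct and matches the paper's proof step for step. Like the paper, you pull $\gamma$ back along the transpose $\Phi_{A,\omega}$, apply Proposition~\ref{brccs.prop.CS} to the finite set $2^A$, and use Lemma~\ref{brccs.lem.Phi-0} to turn precomposition by $h$ into postcomposition by $\calP(h)$. Two small refinements of details the paper leaves implicit: you note that the two color sets are actually equal (the paper only uses inclusion), and you check explicitly that $\Map(A,\calK)=\calP(h)\circ\Map(A,2^\omega)$.
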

\begin{proof}
  Take any nonempty finite set $A$ and let $t$ be the positive integer provided for $2^A$ by Proposition~\ref{brccs.prop.CS}.
  Take any finite Borel coloring $\gamma : \Map(A, 2^\omega) \to k$, and then define
  $$
    \gamma^* : \Map(\omega, 2^A) \to k \text{\quad by\quad} \gamma^* = \gamma \circ \Phi_{A, \omega},
  $$
  where $\Phi_{-, -}$ is the transpose defined above.
  Since $\gamma^*$ is a Borel coloring, Proposition~\ref{brccs.prop.CS} applies, so there is an $h \in \RSurj(\omega, \omega)$ such that
  $$
    |\gamma^*((2^A)^\omega \circ h)| \le t.
  $$
  Let us show that $|\gamma(\calP(h) \circ \Map(A, 2^\omega))| \le t$. Lemma~\ref{brccs.lem.Phi-0} implies:
  $$
    \Phi_{\omega, A}(\calP(h) \circ \Map(A, 2^\omega)) \subseteq (2^A)^\omega \circ h,
  $$
  whence
  $$
    \gamma^*(\Phi_{\omega, A}(\calP(h) \circ \Map(A, 2^\omega))) \subseteq \gamma^*((2^A)^\omega \circ h).
  $$
  Note that $\gamma^*\circ\Phi_{\omega, A} = \gamma$ because $\Phi_{A, \omega}^{-1} = \Phi_{\omega, A}$. Therefore,
  $$
    |\gamma(\calP(h) \circ \Map(A, 2^\omega))| \le |\gamma^*((2^A)^\omega \circ h)| \le t.
  $$
  Recall that $\calP(h)$ is a topological embedding $2^\omega \hookrightarrow 2^\omega$ because $h$ is surjective. So,
  $|\gamma(\Map(A, \calK))| \le t$ where $\calK = \im(\calP(h)) \subseteq2^\omega$.
\end{proof}

\section{The Cantor set as a topological first-order structure}
\label{brccs.sec.CS-FOS}

Let $\lexle$ denote the \emph{lexicographic} ordering on $2^\omega$. If we identify elements of $2^\omega$ with
subsets of $\omega$ then $\lexle$ can be described as follows:
$A \lexle B$ iff $A \subseteq B$, or $\min(B \setminus A) < \min(A \setminus B)$ in case $A$ and $B$ are incomparable.

Let $\Emb(n, 2^\omega)$ denote the set of all the embeddings of the finite linear order $n = \{0, 1, \ldots, n-1\}$
into $(2^\omega, \Boxed{\lexle})$. Note that $\Emb(n, 2^\omega)$ is just a convenient representation of
the set $[2^\omega]^n$ of all $n$-element subsets of $2^\omega$.

\begin{THM}\label{brccs.thm.Cantor-n}
  Let $2^\omega$ be the Cantor set ordered linearly by~$\lexle$.
  For every $n \in \NN$ there is a $t \in \NN$ such that for every finite Borel coloring $\gamma : \Emb(n, 2^\omega) \to k$
  there is a topological first-order embedding $g : (2^\omega, \Boxed\lexle) \hookrightarrow (2^\omega, \Boxed\lexle)$ such that
  $|\gamma(g \circ \Emb(n, 2^\omega))| \le t$.
  In particular, there is a Cantor set $\calK \subseteq 2^\omega$ such that $|\gamma(\Emb(n, \calK))| \le t$.
\end{THM}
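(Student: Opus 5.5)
Apply Theorem~\ref{brccs.thm.Cantor-0} with $A = n$, and take $g = \calP(h)$. The set $\Emb(n, 2^\omega)$ is the subset of $\Map(n, 2^\omega)$ consisting of the strictly $\lexle$-increasing injections. It is Borel, in fact open, since $\lexle$ restricted to pairs of distinct points is clopen. So any finite Borel coloring $\gamma : \Emb(n, 2^\omega) \to k$ extends to a Borel coloring $\hat\gamma : \Map(n, 2^\omega) \to k+1$ by giving every non-embedding the extra colour $k$. Let $t$ be the bound from Theorem~\ref{brccs.thm.Cantor-0} for $|A| = n$. That theorem produces an $h \in \RSurj(\omega,\omega)$ with $|\hat\gamma(\calP(h) \circ \Map(n, 2^\omega))| \le t$, and the same bound then holds for $\gamma(\calP(h) \circ \Emb(n,2^\omega))$, provided $\calP(h) \circ \Emb(n, 2^\omega) \subseteq \Emb(n, 2^\omega)$.

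It remains to check that $g = \calP(h)$ is a topological first-order embedding of $(2^\omega,\lexle)$ into itself. The topological part is already recorded: $h$ is surjective, so $\calP(h)$ is a topological embedding. The real content, and the main obstacle, is showing that $\calP(h)$ is strictly $\lexle$-monotone; this is where rigidity of $h$ has to be used.

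Identify $x \in 2^\omega$ with a subset $X \subseteq \omega$, so that $\calP(h)(X) = h^{-1}(X)$. Let $X \ne Y$ and $m = \min(X \triangle Y)$. Since $h^{-1}(X) \triangle h^{-1}(Y) = h^{-1}(X \triangle Y)$, we get
\[
  \min\bigl(h^{-1}(X) \triangle h^{-1}(Y)\bigr) = \min_{y \in X \triangle Y} \min h^{-1}(y) = \min h^{-1}(m).
\]
The last equality holds because $h$ is rigid, so $y \mapsto \min h^{-1}(y)$ is increasing. Moreover $\min h^{-1}(m)$ lies in $h^{-1}(X)$ exactly when $m \in X$. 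So the first point of disagreement of the images belongs to the same side as the first point of disagreement of $X$ and $Y$.

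The lexicographic order is decided by which set contains the first point of disagreement: in the definition, $X \lexle Y$ iff $\min(X\triangle Y) \in Y$, where the case $X \subseteq Y$ is included. Hence $X \lexle Y$ iff $h^{-1}(X) \lexle h^{-1}(Y)$. So $g$ is order preserving and injective, hence a first-order embedding.

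Consequently $g \circ \Emb(n,2^\omega) \subseteq \Emb(n,2^\omega)$ and $|\gamma(g\circ\Emb(n,2^\omega))| \le t$. For the final clause, take $\calK = \im(g)$, which is homeomorphic to $2^\omega$. Every $f \in \Emb(n,\calK)$ has the form $g \circ f'$ with $f' = g^{-1}\circ f \in \Emb(n,2^\omega)$, so $\gamma(\Emb(n,\calK))$ is contained in $\gamma(g\circ\Emb(n,2^\omega))$ and has at most $t$ colours.
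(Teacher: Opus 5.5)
Your proof is correct and follows essentially the same route as the paper. Like the paper, you extend $\gamma$ to a Borel colouring of $\Map(n,2^\omega)$, apply Theorem~\ref{brccs.thm.Cantor-0}, and check via rigidity of $h$ that $g=\calP(h)$ preserves $\lexle$. The paper reuses colour $0$ rather than adding a fresh colour, which makes no difference, and your computation $\min\bigl(h^{-1}(X)\triangle h^{-1}(Y)\bigr)=\min h^{-1}(\min(X\triangle Y))$ is a compact form of the paper's Claims~1 and~2 that handles both cases and both directions at once.
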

\begin{proof}
  Take any $n \in \NN$ and let $t = t(n)$ be the positive integer provided by Theorem~\ref{brccs.thm.Cantor-0}.
  Take any finite Borel coloring $\gamma : \Emb(n, 2^\omega) \to k$, and then define
  $\gamma' : \Map(n, 2^\omega) \to k$ by
  \begin{align*}
    \gamma'(a) &= \gamma(a), \text{ for } a \in \Emb(n, 2^\omega)\\
	\gamma'(x) &= 0, \text{ otherwise}.
  \end{align*}
  This is a Borel coloring because $\Emb(n, 2^\omega)$ is a Borel subset of $\Map(n, 2^\omega)$.
  Theorem~\ref{brccs.thm.Cantor-0} then tells us that there is an $h \in \RSurj(\omega, \omega)$ such that
  $$
    |\gamma'(\calP(h) \circ \Map(n, 2^\omega))| \le t.
  $$

  Let $g = \calP(h)$. We have seen that $g$ is a topological embedding. Let us show that this is also
  a relational embedding $(2^\omega, \Boxed\lexle) \hookrightarrow (2^\omega, \Boxed\lexle)$.
  For this part of the proof it will be more convenient to take $\calP(\omega)$ to be the set of all the subsets of $\omega$.
  Then $g(A) = h^{-1}(A)$ for $A \subseteq \omega$.

  \medskip

  Claim 1. $\min g(A) = \min h^{-1}(\min A)$ for every $\0 \ne A \in \calP(\omega)$.

  Proof. Let $s = \min A$. Then $\min h^{-1}(s) < \min h^{-1}(a)$ for all $a \in A \setminus \{s\}$ because $h$ is a rigid surjection.
  Therefore, $\min g(A) = \min\big(h^{-1}(s) \cup \bigcup_{a \in A \setminus \{s\}} h^{-1}(a)\big) = \min h^{-1}(s)$. This proves Claim~1.

  \medskip

  Claim 2. $A \lexle B$ iff $g(A) \lexle g(B)$ for all $A, B \in \calP(\omega)$.

  Proof. Note that $A = \0$ iff $g(A) = h^{-1}(A) = \0$, so the claim holds by the definition of $\lexle$ if one of $A$, $B$ is empty. For the rest of the proof
  assume that $A, B \ne \0$. If $A \subseteq B$ then $h^{-1}(A) \subseteq h^{-1}(B)$ trivially. Assume, therefore, that $A$ and $B$ are incomparable.
  Then $A \lexle B$ means that $\min(B \setminus A) < \min(A \setminus B)$. Claim~1 and the fact that $h$ is a rigid surjection now yield:
  $
    \min g(B \setminus A) = \min h^{-1}(\min (B \setminus A)) < \min h^{-1}(\min (A \setminus B)) = \min g(A \setminus B) 
  $.
  This proves Claim 2.
  
  \medskip
  
  Since $g$ is a topological first-order embedding $(2^\omega, \Boxed\lexle) \hookrightarrow (2^\omega, \Boxed\lexle)$, it follows that
  $g \circ \Emb(n, 2^\omega) \subseteq \Emb(n, 2^\omega)$. Therefore,
  $$
    \gamma(g \circ \Emb(n, 2^\omega)) = \gamma'(g \circ \Emb(n, 2^\omega)) \subseteq \gamma'(g \circ \Map(n, 2^\omega))
  $$
  showing that $|\gamma(g \circ \Emb(n, 2^\omega))| \le t$.
  In other words, $|\gamma(\Emb(n, \calK))| \le t$ where $\calK = \im(g) \subseteq 2^\omega$.
\end{proof}

\begin{COR}[cf.\ Blass \cite{blass-1981}]
  For every $n \in \NN$ there is a $t \in \NN$ such that for every perfect set $P \subseteq \RR$ and every
  finite Borel coloring $\gamma : [P]^n \to k$ of strictly increasing $n$-tuples of elements of $P$
  there is a perfect set $Q \subseteq P$ such that $|\gamma([Q]^n)| \le t$.
\end{COR}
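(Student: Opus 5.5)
The plan is to transfer Theorem~\ref{brccs.thm.Cantor-n} from $(2^\omega, \lexle)$ to an arbitrary perfect set $P \subseteq \RR$ through an order-preserving topological embedding of the Cantor set into $P$. Take $t = t(n)$ as in Theorem~\ref{brccs.thm.Cantor-n}. Given a perfect $P \subseteq \RR$, I will build a continuous injection $e : 2^\omega \to P$ with
$$
  A \lexle B \quad\text{iff}\quad e(A) \le e(B).
$$
This is a standard Cantor scheme. To every finite $0$--$1$ sequence $s$ assign a nonempty closed interval $I_s$ with the following properties: $I_s \cap P$ contains infinitely many points; $I_{s0}, I_{s1} \subseteq I_s$ are disjoint; $\operatorname{diam} I_s \to 0$ as $|s|$ grows; and $I_{s\frown i}$ lies to the left of $I_{s\frown j}$ whenever ``$i$ before $j$'' in the lexicographic sense.

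The order condition needs care. In the paper's convention, identifying $x \in 2^\omega$ with a subset of $\omega$, the sequence with a $1$ at the first point of difference is the smaller one, since $A \subseteq B$ gives $A \lexle B$. I will read the convention off directly as it is stated, with $\min(B\setminus A) < \min(A \setminus B)$ meaning $A \lexle B$, and place $I_{s1}$ and $I_{s0}$ on the correct sides accordingly. The intervals can be found because every point of the perfect set $P$ is a limit point, so each $I_s \cap P$ contains two points separated by a gap, and small closed intervals around them meet $P$ perfectly.

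Put $e(x) = $ the unique point of $\bigcap_m I_{x\restriction m}$. This point lies in $P$ because $P$ is closed and the intervals shrink. The map $e$ is continuous and injective, hence a homeomorphism onto its image since $2^\omega$ is compact, and it is order preserving.

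Given a Borel coloring $\gamma$ of $[P]^n$, identified with increasing $n$-tuples, define $\gamma'(a) = \gamma(e \circ a)$ for $a \in \Emb(n, 2^\omega)$. Because $e$ is continuous, $\gamma'$ is Borel. Theorem~\ref{brccs.thm.Cantor-n} yields a Cantor set $\calK \subseteq 2^\omega$ with $|\gamma'(\Emb(n, \calK))| \le t$. Set $Q = e(\calK)$. It is a homeomorphic copy of the Cantor set, hence compact with no isolated points, hence perfect, and $Q \subseteq P$. Every increasing $n$-tuple from $Q$ has the form $e \circ a$ with $a \in \Emb(n, \calK)$, because $e$ is an order-isomorphism onto its image. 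So $|\gamma([Q]^n)| \le t$, and $t$ depends only on $n$.

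The only real obstacle is the order-preservation bookkeeping. The orientation of $\lexle$ has to be matched with the left/right placement of intervals, and one has to check that the $\calK$ produced in Theorem~\ref{brccs.thm.Cantor-n} has its $n$-subsets correctly represented by $\Emb(n,\calK)$, which it is, since $\calK$ carries the restricted order. Everything else is routine.
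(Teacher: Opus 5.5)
Your proof is correct and follows the paper's route: place a Cantor set inside $P$, pull $\gamma$ back to $\Emb(n,2^\omega)$, apply Theorem~\ref{brccs.thm.Cantor-n}, and take $Q$ to be the image of $\calK$; your order-preserving Cantor scheme just makes explicit the identification $[\calC]^n = \Emb(n,\calC)$ that the paper takes for granted. One slip: under the paper's definition $\lexle$ is the ordinary lexicographic order with $0<1$ (e.g.\ $\0 \lexle \{0\}$), so the sequence with $0$, not $1$, at the first point of difference is the smaller one and $I_{s0}$ must lie to the left of $I_{s1}$ --- your stated plan to read the orientation off the definition already covers this.
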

\begin{proof}
  Take any $n \in \NN$ and let $t$ be the positive integer provided for $n$ by Theorem~\ref{brccs.thm.Cantor-n}.
  Take any perfect $P \subseteq \RR$ and a finite Borel coloring $\gamma : [P]^n \to k$.
  Let $\calC \subseteq P$ be a homeomorph of the Cantor set contained in $P$ and let
  $
    \gamma' : \Emb(n, \calC) \to k
  $
  be the restriction of $\gamma$ to $[\calC]^n = \Emb(n, \calC)$. By Theorem~\ref{brccs.thm.Cantor-n}
  there is a Cantor set $\calK \subseteq \calC$ such that $|\gamma'(\Emb(n, \calK))| \le t$. So we can take $Q = \calK$.
\end{proof}

Besides the liner order $\lexle$, the Cantor set $2^\omega$
carries the obvious partial order $\le$ defined coordinatewise ($f \sqsubseteq g$ iff $f(n) \le g(n)$ for all $n \in \omega$)
turning $2^\omega$ into a topological poset $\calQ = (2^\omega, \Boxed\sqsubseteq)$.
Note that $\lexle$ is a linear order of $2^\omega$ that extends $\sqsubseteq$, so $\calE\calQ = (2^\omega, \Boxed\sqsubseteq, \Boxed\lexle)$
is a \emph{linearly ordered poset} (that is, a poset together with a linear order which extends the partial order).
The Cantor set also carries a natural graph structure:
for $f, g \in 2^\omega$ we let $f \sim g$ if $f \ne g$ and there is an $n \in \omega$ such that $f(n) = g(n) = 1$.
Let $\calG = (2^\omega, \Boxed\sim)$ be the corresponding topological graph, and let
$\calO\calG = (2^\omega, \Boxed\sim, \Boxed\lexle)$ denote its ordered version.

On top of that, it is easy to check that the join $\lor$, meet $\land$ and
the operation $(-)^c$ of taking the complement (defined coordinatewise by $(f \lor g)(n) = \max\{f(n), g(n)\}$, $f, g \in 2^\omega$,
and analogously for $\land$ and $(-)^c$) are continuous with respect to the topology of the Cantor set.
Therefore, $\calB = (2^\omega, \Boxed\land, \Boxed\lor, \Boxed{(-)^c}, 0, 1)$ is a topological Boolean algebra.
Again, $\lexle$ extends the partial order inherent to the Boolean structure, so
$\calE\calB = (2^\omega, \Boxed\land, \Boxed\lor, \Boxed{(-)^c}, 0, 1, \Boxed\lexle)$ is a \emph{linearly ordered topological Boolean algebra}.
Note that $\calL = (2^\omega, \Boxed\land, \Boxed\lor)$ is a topological Boolean lattice and that
$\calE\calL = (2^\omega, \Boxed\land, \Boxed\lor, \Boxed\lexle)$ is a \emph{linearly ordered topological Boolean lattice}.

The proof of Theorem~\ref{brccs.thm.Cantor-n} gives us a blueprint for the proof of the following statement.

\begin{THM}
  Let $\calC$ be any of the eight structures based on the Cantor set introduced above. For every finite substructure $\calA$ of $\calC$
  there is a $t \in \NN$ such that for every finite Borel coloring $\gamma : \Emb(\calA, \calC) \to k$
  there is a topological first-order embedding $g : \calC \hookrightarrow \calC$ such that
  $|\gamma(g \circ \Emb(\calA, \calC))| \le t$.
  In particular, there is a topological copy $\calK \subseteq \calC$ of $\calC$ such that $|\gamma(\Emb(\calA, \calK))| \le t$.
\end{THM}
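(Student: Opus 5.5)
The plan is to follow the blueprint of Theorem~\ref{brccs.thm.Cantor-n}: show that for every $h \in \RSurj(\omega, \omega)$ the map $g = \calP(h)$ is a topological first-order embedding of each of the eight structures into itself, and then transfer the bound from Theorem~\ref{brccs.thm.Cantor-0}. Let $\calA$ be a finite substructure of $\calC$ with underlying set $A$, $|A| = n$. Each $a \in \Emb(\calA, \calC)$ is in particular a map $A \to 2^\omega$, so $\Emb(\calA, \calC) \subseteq \Map(A, 2^\omega)$. This subset is Borel; in fact it is closed intersected with open. Injectivity is an open condition, and preserving a closed relation or a continuous operation on the finitely many tuples from $A$ is a closed condition. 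Non-relations (for instance $a(x) \not\sqsubseteq a(y)$, $a(x) \not\sim a(y)$, or $a(x) \not\lexle a(y)$) are open conditions, since the relations are closed. Extend $\gamma$ to $\gamma' : \Map(A, 2^\omega) \to k$ by sending everything outside $\Emb(\calA,\calC)$ to $0$. Then $\gamma'$ is Borel, and Theorem~\ref{brccs.thm.Cantor-0} with $t = t(|A|)$ gives $h$ with $|\gamma'(\calP(h) \circ \Map(A, 2^\omega))| \le t$.

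Put $g = \calP(h)$, viewed as $X \mapsto h^{-1}(X)$ on $\calP(\omega)$. We already know $g$ is a topological embedding, and that it preserves and reflects $\lexle$ by Claim~2 in the proof of Theorem~\ref{brccs.thm.Cantor-n}. The remaining checks are elementary because preimages commute with Boolean operations:
\[
  h^{-1}(X \cup Y) = h^{-1}(X) \cup h^{-1}(Y), \quad
  h^{-1}(X \cap Y) = h^{-1}(X) \cap h^{-1}(Y), \quad
  h^{-1}(\omega \setminus X) = \omega \setminus h^{-1}(X),
\]
and moreover $h^{-1}(\0)=\0$ and $h^{-1}(\omega)=\omega$. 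Hence $g$ is a Boolean algebra homomorphism and in particular a lattice homomorphism. Being injective, $g$ is an embedding for $\calB$, $\calL$ and their ordered versions $\calE\calB$, $\calE\calL$; its image is automatically closed under the operations.

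For the order $\sqsubseteq$ we use that $h$ is surjective. If $X \subseteq Y$ then clearly $h^{-1}(X) \subseteq h^{-1}(Y)$. Conversely, if $h^{-1}(X) \subseteq h^{-1}(Y)$, then applying $h$ and using surjectivity, $X = h(h^{-1}(X)) \subseteq h(h^{-1}(Y)) = Y$. So $g$ embeds $\calQ$ and $\calE\calQ$. For the graph relation, $X \cap Y \ne \0$ holds iff $h^{-1}(X \cap Y) = h^{-1}(X) \cap h^{-1}(Y) \ne \0$, again by surjectivity. Together with injectivity of $g$ (which handles the clause $f \ne g$), this shows $g$ preserves and reflects $\sim$, so it embeds $\calG$ and $\calO\calG$.

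Since $g$ is a topological first-order embedding $\calC \hookrightarrow \calC$, composition with $g$ maps $\Emb(\calA,\calC)$ into itself. Therefore
\[
  \gamma(g \circ \Emb(\calA, \calC)) = \gamma'(g \circ \Emb(\calA, \calC)) \subseteq \gamma'(g \circ \Map(A, 2^\omega)),
\]
so the left-hand side has at most $t$ colors, and $\calK = \im(g)$ is the required topological copy. The only step needing care is the Borelness of $\Emb(\calA,\calC)$ inside $\Map(A,2^\omega)$, which relies on the relations being closed and the operations continuous. For the algebraic structures one should also note that an embedding of a finite substructure need only respect operations on tuples from $A$, and $A$ is closed under them. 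I expect no real obstacle beyond these verifications.
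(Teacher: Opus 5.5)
Your proposal is correct and is exactly the paper's argument: extend $\gamma$ by $0$ to $\Map(A,2^\omega)$, apply Theorem~\ref{brccs.thm.Cantor-0}, and take $g=\calP(h)$; you also correctly carry out, for all eight structures, the verification that $\calP(h)$ is a topological first-order embedding, which the paper leaves as ``easy''. One small slip: the graph relation $\sim$ is open rather than closed (e.g.\ $\{k\}\sim\{1,k\}$ for every $k\ge 2$, but the limits $\0$ and $\{1\}$ are not adjacent), so $a(x)\not\sim a(y)$ is a closed condition rather than an open one, but $\Emb(\calA,\calC)$ is still Borel and nothing in your argument is affected.
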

\begin{proof}
  Let $L$ be the first-order language of $\calC$. Take a finite substructure $\calA = (A, L^A)$ of $\calC$,
  and let $t$ be the positive integer provided by Theorem~\ref{brccs.thm.Cantor-0} for~$A$.
  Take any finite Borel coloring $\gamma : \Emb(\calA, \calC) \to k$, and then define
  $\gamma' : \Map(A, 2^\omega) \to k$ by:
  \begin{align*}
    \gamma'(a) &= \gamma(a), \text{ for } a \in \Emb(\calA, \calC)\\
	\gamma'(x) &= 0, \text{ otherwise}.
  \end{align*}
  This is a Borel coloring because $\Emb(\calA, \calC)$ is a Borel subset of $\Map(A, 2^\omega)$.
  Theorem~\ref{brccs.thm.Cantor-0} then tells us that there is an $h \in \RSurj(\omega, \omega)$ such that
  $$
    |\gamma'(\calP(h) \circ \Map(A, 2^\omega))| \le t.
  $$

  Let $g = \calP(h)$. It is easy to show that $g$ is a topological first-order embedding $\calC \hookrightarrow \calC$, so
  $g \circ \Emb(\calA, \calC) \subseteq \Emb(\calA, \calC)$. Therefore,
  $$
    \gamma(g \circ \Emb(\calA, \calC)) = \gamma'(g \circ \Emb(\calA, \calC)) \subseteq \gamma'(g \circ \Map(A, 2^\omega)),
  $$
  showing that $|\gamma(g \circ \Emb(\calA, \calC))| \le t$.
  In other words, $|\gamma(\Emb(\calA, \calK))| \le t$ where $\calK = \im(g) \subseteq \calC$.
\end{proof}

In particular, this result tells us that the topological Boolean algebra $\calB = (2^\omega, \Boxed\land, \Boxed\lor, \Boxed{(-)^c}, 0, 1)$
has finite big Ramsey degrees.
By contrast, it has recently been shown \cite{no-fbrd-ctble-atomless-ba} that the countable atomless Boolean algebra does not have big Ramsey degrees.
This sharp distinction suggests the following:

\paragraph{Open problem.} Classify Boolean algebras with finite big Ramsey degrees.

\end{document}